\DeclareMathAlphabet{\mathbbold}{U}{bbold}{m}{n}
\DeclareSymbolFont{rsfscript}{OMS}{rsfs}{m}{n}
\DeclareSymbolFontAlphabet{\mathrsfs}{rsfscript}
\DeclareFontFamily{OMS}{rsfs}{\skewchar\font'177}
\DeclareFontShape{OMS}{rsfs}{m}{n}{%
      <5> rsfs5
      <6> <7> rsfs7
      <8> <9> <10> rsfs10
      <10.95> <12> <14.4> <17.28> <20.74> <24.88> rsfs10
      }{}
\def\calA{\mathrsfs{A}}
\def\calF{\mathrsfs{F}}
\def\calV{\mathrsfs{V}}
\DeclareMathOperator{\st}{st}
\DeclareMathOperator{\Alt}{Alt}
\def\sAlt{\widetilde{\mathop{\Alt^!}}}
\def\sAs{\mathop{\calA_{(2)}}}
\DeclareMathOperator{\Ind}{Ind}
\DeclareMathOperator{\sgn}{sgn}
\DeclareMathOperator{\gr}{gr}
\DeclareMathOperator{\qgr}{qgr}
\theoremstyle{plain}
\newtheorem*{itheorem}{Main Result}
\newtheorem {theorem}{Theorem}
\newtheorem {corollary}{Corollary}
\newtheorem {proposition}{Proposition}
\theoremstyle{definition}
\newtheorem {definition}{Definition}
\newtheorem {remark}{Remark}
\begin{document}

\title{Dual alternative algebras\\ in characteristic three}
\author{Vladimir Dotsenko}
\address{School of Mathematics, Trinity College, Dublin 2, Ireland}
\email{vdots@maths.tcd.ie}

\begin{abstract}
We prove that the dimension of the arity~$n$ component of the operad of dual alternative algebras over a field of characteristic three is equal to~$2^n-n$, and describe the structure of the 
corresponding $S_n$-module. 
\end{abstract}

\maketitle

\section{Introduction}

A binary operation $\circ$ on a vector space $V$ is said to define an alternative algebra structure if its associator $(a_1,a_2,a_3)=(a_1\circ a_2)\circ a_3-a_1\circ(a_2\circ a_3)$ is an alternating function, that is 
 $$
(a_1,a_2,a_3)=(-1)^\sigma (a_{\sigma(1)},a_{\sigma(2)},a_{\sigma(3)}) \text{ for all } \sigma\in S_3.
 $$  
The operad $\Alt$ of alternative algebras~\cite{ZSSS} is the only known ``nice'' \cite{TypeAlg} quadratic operad which is not Koszul; this fact has been recently established in characteristic zero by Dzhumadil'daev and Zusmanovich~\cite{DZ}; they computed dimensions of the first few components of the operad $\Alt$ and its Koszul dual~$\Alt^!$, and used the power series criterion due to Ginzburg and Kapranov~\cite{GK} to show that these operads fail to be Koszul. The operad $\Alt^!$ has a very nice description itself; it controls ``dual alternative algebras'', that is associative algebras satisfying the identity
\begin{equation}\label{altdualdef}
a_1a_2a_3+a_1a_3a_2+a_2a_1a_3+a_2a_3a_1+a_3a_1a_2+a_3a_2a_1=0.
\end{equation}
For the characteristic of the ground field different from $2$ and $3$, the corresponding variety of associative algebras coincides with the variety of associative algebras with the identity $x^3=0$; in particular, the corresponding operad is nilpotent, $\Alt^!(6)=0$. Interestingly enough, in characteristic~$3$ the behaviour of this operad changes dramatically, and it has nonzero operations of every arity. Dzhumadil'daev and Zusmanovich conjectured, based on computer experiments and some results of Etingof, Kim and Ma \cite{EKM}, that over a field of characteristic~$3$
 $$
\dim\Alt^!(n)=2^n-n,
 $$ 
and suggested an outline of a possible proof, which they however did not complete. It turns out that in fact this statement follows from the results of Lopatin~\cite{Lop} who, utilising computer calculations, was able to describe free algebras with the identity $x^3=0$ for any ground field; the multilinear components of those free algebras are identified with the components of the operad in question. The goal of this short note is to give a computer-independent proof of a stronger version of this result, also taking into account the action of the symmetric group $S_n$ on~$\Alt^!(n)$ by permutations of arguments. 
\begin{itheorem}
Over a field of characteristic three, the $S_n$-module $\Alt^!(n)$ admits a Specht filtration with the factors being the Specht modules
$\mathbbold{1}_n$ and $\Ind_{S_k\times S_{n-k}}^{S_n}(\mathbbold{1}_k\otimes\sgn_{n-k})$, $0\le k\le n-2$, each with multiplicity one.
\end{itheorem}
It turns out that the easiest way to prove this result is via the splitting of the associativity relation due to Livernet and Loday~\cite{MR}. Our strategy is to introduce a filtration on the operad $\Alt^!$ for which the obstructions to the associativity of the product $\frac12(ab+ba)$ disappear after considering the corresponding graded operad. In fact, for the representation-theoretic statement this is the key trick we use, as it turns out that the components of the graded operad are direct sums of Specht modules. However, there is another important idea without which no clear proof seems to be available: to compute dimensions of our operads, we treat them as shuffle operads, and examine their Gr\"obner bases~\cite{DK}. More precisely, we first modify the defining relations of $\Alt^!$ in such a way that they remain the same in characteristic three, define a ``smaller'' operad otherwise, but are not as restrictive as~\eqref{altdualdef}, and then ``degenerate'' those relations to obtain a much simpler operad that exhibits the same behaviour as the characteristic-three dual alternative operad in all characteristics different from~$2$. 

\subsection*{Acknowledgements. } Thanks are due to Ivan Yudin whose questions led us to discovering a gap in the original proof of the main theorem, and made us present the argument in a hopefully more conceptual way, and to the referee who pointed out that the conjecture of Dzhumadil'daev and Zusmanovich was actually proved in~\cite{Lop} before being formulated in~\cite{DZ}. Most of this project was completed while the author was supported by Grant GeoAlgPhys 2011-2013 awarded by the University of Luxembourg.

\section{Recollections}

For details on operads we refer the reader to the book~\cite{LV}, for details on Gr\"obner bases for operads~--- to the paper~\cite{DK}. Here we 
only recall the key notions used throughout the paper. 

By an operad we mean a monoid in one of the two monoidal categories: the category of symmetric collections equipped 
with the composition product or the category of nonsymmetric collections equipped with the shuffle composition product. The former kind of monoids is referred to as symmetric 
operads, the latter~--- as shuffle operads. We always assume that our collections are reduced, that is, have no elements of arity~$0$. In general, most ``natural'' operads are symmetric, 
but for computational purposes it is useful to treat them as shuffle operads. 

A very useful technical tool for dealing with (shuffle) operads is given by Gr\"obner bases. More precisely, operads can be presented via generators and relations, that is as quotients of free operads $\calF(\calV)$, where $\calV$ is the space of generators. The free shuffle operad generated by a given nonsymmetric collection admits a basis of ``tree monomials'' which can be defined combinatorially; a shuffle composition of tree monomials is again a tree monomial. 

There exist several ways to introduce a total ordering of tree monomials in such a way that the operadic compositions are compatible with that total ordering. There is also a combinatorial definition of divisibility of tree monomials that agrees with the naive operadic definition: one tree monomial occurs as a subtree in another one if and only if the latter can be obtained from the former by operadic compositions. A Gr\"obner basis of an ideal of the free operad is a system of generators of this ideal for which the leading term of every element of the ideal is divisible by one of the leading terms of elements of our system. Such a system of generators allows to perform ``long division'' modulo the ideal, computing for every element its canonical representative in the quotient. There exists an algorithmic way to compute a Gr\"obner basis starting from any given system of generators (``Buchberger's algorithm for shuffle operads''); combinatorially, it requires to ``resolve'' overlaps of the leading terms of relations. Each common multiple of the leading terms of two relations gives a new element, called the S-polynomial, which becomes a candidate for being adjoined to the Gr\"obner basis. If this element can be reduced to zero using long division with respect to the existing relations, it does not contribute; otherwise, its normal form is adjoined and the computation proceeds in the same way.

\section{The dual alternative operad}

\begin{definition}
The operad of dual alternative algebras is generated by one binary operation $a,b\mapsto ab$ subject to the identities
\begin{gather}
(a_1a_2)a_3=a_1(a_2a_3),\\
(a_1a_2)a_3+(a_1a_3)a_2+(a_2a_1)a_3+(a_2a_3)a_1+(a_3a_1)a_2+(a_3a_2)a_1=0.\label{altdual}
\end{gather}
\end{definition}

An important observation made in~\cite{DZ} is that in characteristic three the left-hand side of the second identity is actually a Lie monomial.

\begin{proposition}
Over a field of characteristic three, \eqref{altdual} can be replaced by an equivalent identity
\begin{equation}\label{altdual3}
[[a_1,a_2],a_3]+[[a_1,a_3],a_2]=0
\end{equation}
for the commutator $[a_1,a_2]=a_1a_2-a_2a_1$.
\end{proposition}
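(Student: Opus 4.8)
The plan is to deduce the statement from a short direct computation in arity three, using that the first defining identity already makes the operad a quotient of $\As$. Both \eqref{altdual} and \eqref{altdual3} are multilinear of degree three in $a_1,a_2,a_3$, so modulo the associativity relation each of them becomes an element of $\As(3)$, the span of the six monomials $a_{\sigma(1)}a_{\sigma(2)}a_{\sigma(3)}$ with $\sigma\in S_3$. It therefore suffices to check that, over a field of characteristic three, these two elements of $\As(3)$ coincide up to a nonzero scalar: then the two-sided operadic ideals generated, together with the associativity relation, by \eqref{altdual} and by \eqref{altdual3} in the free operad agree, and the two presentations define the same operad.

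Concretely, I would first expand $[[a_1,a_2],a_3]=[a_1a_2-a_2a_1,a_3]$ using bilinearity of the commutator and then reassociate the products, obtaining $a_1a_2a_3-a_2a_1a_3-a_3a_1a_2+a_3a_2a_1$; applying the transposition $(2\,3)$ to the arguments gives $[[a_1,a_3],a_2]=a_1a_3a_2-a_3a_1a_2-a_2a_1a_3+a_2a_3a_1$. Summing the two expressions, the left-hand side of \eqref{altdual3} equals
$$a_1a_2a_3+a_1a_3a_2-2\,a_2a_1a_3+a_2a_3a_1-2\,a_3a_1a_2+a_3a_2a_1 .$$
The characteristic enters only here: over a field of characteristic three $-2\equiv 1$, so the displayed element is literally the left-hand side of \eqref{altdual}. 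Running the same chain of equalities backwards gives the converse implication, so the two identities are equivalent in the presence of associativity.

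The argument is entirely mechanical; the only thing to watch is the bookkeeping of the six signed monomials and the single reduction $-2\equiv 1$, which is precisely the coincidence responsible for the symmetrised associator becoming a Lie element in this characteristic, so there is really no substantive obstacle. (For characteristic different from three the same computation shows \eqref{altdual3} is strictly stronger than \eqref{altdual}, consistent with the fact that there \eqref{altdual} cuts out the variety defined by $x^3=0$ while \eqref{altdual3} does not.)
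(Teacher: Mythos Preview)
Your proof is correct and follows exactly the paper's approach: both expand the nested commutators, collect the six monomials, and invoke $-2\equiv 1\pmod 3$ to identify the result with the left-hand side of \eqref{altdual}. Your additional framing in terms of operadic ideals in $\As(3)$ is sound and makes the logic of ``equivalence'' explicit, which the paper leaves implicit.

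One small correction to your closing parenthetical: in characteristic different from three, \eqref{altdual3} is \emph{not} strictly stronger than \eqref{altdual}. The sum of the six coefficients in your displayed expansion is zero, so \eqref{altdual3} and all its $S_3$-translates lie in the augmentation-zero part of $\As(3)$, whereas \eqref{altdual} spans the trivial representation; hence neither operadic ideal contains the other. Concretely, any commutative associative algebra satisfies $[[a,b],c]=0$ and hence \eqref{altdual3}, but not \eqref{altdual}, while conversely $\Alt^!(6)=0$ in characteristic zero yet the two-step Lie-nilpotent operad $\sAs$ determined by \eqref{altdual3} has $\dim\sAs(6)=32$. This does not affect the statement or your proof of it.
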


\begin{proof}
Indeed, we have
\begin{multline*}
[[a_1,a_2],a_3]+[[a_1,a_3],a_2]=\\=(a_1a_2-a_2a_1)a_3-a_3(a_1a_2-a_2a_1)+(a_1a_3-a_3a_1)a_2-a_2(a_1a_3-a_3a_1)=\\=
a_1a_2a_3+a_1a_3a_2+a_3a_2a_1+a_2a_3a_1-2a_2a_1a_3-2a_3a_1a_2=\\=a_1a_2a_3+a_1a_3a_2+a_2a_1a_3+a_2a_3a_1+a_3a_1a_2+a_3a_2a_1=0.
\end{multline*}
\end{proof}

If the characteristic of the ground field is different from two, the associative product can be split into a symmetric binary operation $a_1,a_2\mapsto a_1\cdot a_2=\frac12(a_1a_2+a_2a_1)$ and a skew-symmetric one $a_1,a_2\mapsto [a_1,a_2]=\frac12(a_1a_2-a_2a_1)$; this leads to a definition of the associative operad as the operad generated by a skew-symmetric operation and a symmetric operation subject to the identities~\cite{MR}
\begin{gather}
[a_1\cdot a_2,a_3]=a_1\cdot[a_2,a_3]+[a_1,a_3]\cdot a_2,\label{leibniz}\\
(a_1\cdot a_2)\cdot a_3-a_1\cdot(a_2\cdot a_3)=[a_2,[a_1,a_3]]. \label{def-ass}
\end{gather}
Note that the Jacobi identity $[a_1,[a_2,a_3]]+[a_2,[a_3,a_1]]+[a_3,[a_1,a_2]]=0$ for the bracket follows from the last of these identities. For a ground field of characteristic different from~$3$, in the operad whose defining relations are \eqref{leibniz}, \eqref{def-ass} and, in addition, \eqref{altdual3}, the relation $[[a_1,a_2],a_3]=0$ is satisfied. (Indeed, \eqref{altdual3} can be rewritten as $[[a_1,a_2],a_3]=[[a_3,a_1],a_2]$, which means that the nested commutator $[[a_1,a_2],a_3]$ is cyclically symmetric. This makes the Jacobi identity become $3[[a_1,a_2],a_3]=0$). The corresponding operad has been studied from various viewpoints previously, see e.g. \cite{BML,DL,EKM,FS,KR,Lat}. For pedagogical purposes, we first study this operad using the Gr\"obner bases technique. (Note that most of the references where this operad or the corresponding varieties of algebras, or the corresponding free algebras are discussed assume that the ground field is of characteristic zero; we do not need this assumption). Once that is done, we proceed with applying the same methods to a modification of that operad for which the same strategy applies, but which is better modelling the characteristic three behaviour of the dual alternative operad.

\section{A toy model}

\begin{definition}
The \emph{two-step Lie nilpotent associative operad} $\sAs$ is generated by a symmetric binary operation $a_1,a_2\mapsto a_1\cdot a_2$ and a skew-symmetric binary operation $a_1,a_2\mapsto [a_1,a_2]$ subject to relations
\begin{gather}
[[a_1,a_2],a_3]=0,\label{2step1}\\
[a_1\cdot a_2,a_3]=a_1\cdot[a_2,a_3]+[a_1,a_3]\cdot a_2,\label{2step2}\\
(a_1\cdot a_2)\cdot a_3-a_1\cdot(a_2\cdot a_3)=0.\label{2step3}
\end{gather}
\end{definition}
Our previous discussions show that this is the operad one obtains over a field of characteristic different from three after imposing the relations \eqref{leibniz}, \eqref{def-ass} and \eqref{altdual3}. Our goal is to explain how it can be approached from the operadic Gr\"obner bases viewpoint. The first step of the corresponding computation is discussed in~\cite[Ex.~8.10.11]{LV}. 

\begin{theorem}\label{sAs}
Over any field, $\dim\sAs(n)=2^{n-1}$.
\end{theorem}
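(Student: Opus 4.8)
The plan is to set up a shuffle-operad presentation of $\sAs$ and compute a quadratic Gr\"obner basis, from which the dimension count $\dim\sAs(n)=2^{n-1}$ will follow by counting normal-form tree monomials. First I would fix an ordering on the two generators and on tree monomials (for instance the path-lexicographic/graded order used in \cite{DK}), treating both the symmetric product $a_1\cdot a_2$ and the bracket $[a_1,a_2]$ as nonsymmetric generators in the associated shuffle operad, so that each corresponds to two tree monomials (the two orderings of the leaves at the root) subject to the (anti)symmetry being incorporated into the normal form. Then I would orient the three defining relations: \eqref{2step1} already kills all monomials of the shape $[[-,-],-]$; \eqref{2step3} orients as $(a_1\cdot a_2)\cdot a_3 \mapsto a_1\cdot(a_2\cdot a_3)$, i.e. ``left comb of products rewrites to right comb''; and \eqref{2step2} expresses $[a_1\cdot a_2,a_3]$ — a bracket-over-product tree — in terms of product-over-bracket trees. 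The expectation is that these, together with the consequences of (anti)symmetry (Jacobi, which by the remark in the text is already a consequence of \eqref{def-ass} but here must be checked directly against \eqref{2step1}, \eqref{2step2}), form a Gr\"obner basis after resolving the finitely many S-polynomials coming from overlaps at arity $3$ and $4$.

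Granting that the Gr\"obner basis is quadratic (or at worst has a few low-arity higher elements that still close up), the normal monomials are exactly the trees in which no bracket sits above another bracket and no product sits above a product along a left edge — concretely, a normal monomial of arity $n$ is a right-combed product of ``blocks'', each block being either a single leaf or a bracket $[x,y]$ of two already-combed pieces, with the bracket never iterated. I would then show that such monomials are in bijection with a set of cardinality $2^{n-1}$: the cleanest route is to observe that the symmetric-operad components $\sAs(n)$ are spanned by $a_1\cdot \bigl(\text{product of some } [a_i,a_j]\text{'s and remaining } a_k\text{'s}\bigr)$ with a total left-to-right reading fixed, and to match these against, say, subsets of $\{2,\dots,n\}$ or equivalently against the set of ways of choosing which consecutive slots are ``bracketed''; the count $2^{n-1}$ is the Catalan-free answer reflecting that only depth-one brackets survive. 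Equivalently one may verify that the Hilbert series is $\sum_n 2^{n-1}\frac{t^n}{n!}$ after \cite[Ex.~8.10.11]{LV} handles the first step.

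The main obstacle I anticipate is the Gr\"obner basis computation itself: the relation \eqref{2step2} together with the skew-symmetry of $[-,-]$ and the symmetry of $-\cdot-$ produces a moderately large tangle of S-polynomials at arities $3$ and $4$ (the Leibniz rule overlapping with itself, with associativity, and with the two-step nilpotency), and it is not a priori obvious that the process terminates with a finite, well-behaved basis rather than spawning an infinite family. I would address this by organising the monomials by the ``shape'' of their bracket/product structure, proving that every reduction strictly decreases a well-chosen statistic (e.g. the number of products appearing strictly above a bracket, lexicographically refined by leaf order), which both guarantees termination and cuts the S-polynomial check down to a short finite list. A secondary, more bookkeeping-heavy obstacle is making the passage between the shuffle-operad normal forms and the $S_n$-structure precise enough to read off a clean basis of $\sAs(n)$ over an arbitrary field; since for the dimension statement only the count matters, I would defer the representation-theoretic refinement and simply tally the shuffle normal monomials, each contributing one basis element to $\sAs(n)$, arriving at $2^{n-1}$.
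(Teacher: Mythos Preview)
Your overall strategy---compute a shuffle Gr\"obner basis for $\sAs$ and count normal monomials---is exactly the paper's approach, but your central expectation is wrong, and this is a genuine gap, not a bookkeeping issue. The Gr\"obner basis is \emph{not} quadratic and does \emph{not} close up after ``a few low-arity higher elements'': it is infinite, with new elements in every arity $k\ge 4$. The first nontrivial S-polynomial arises from the overlap of \eqref{2step1} and \eqref{2step2} on the tree $[[a_1\cdot a_2,a_3],a_4]$: reducing the outer bracket by \eqref{2step1} gives $0$, while reducing the inner bracket by Leibniz and then applying \eqref{2step1} again gives $[a_1,a_4]\cdot[a_2,a_3]+[a_1,a_3]\cdot[a_2,a_4]$, which is irreducible with respect to the quadratic relations. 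Similarly one obtains $[a_1,a_3]\cdot[a_2,a_4]+[a_1,a_2]\cdot[a_3,a_4]$. These propagate: for every arity $k\ge 4$ and every choice $p<q<r<s$ one gets the analogous relation among $\st$-products with two bracket factors whose label sets interleave.

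Consequently your count of normal forms is off. With only the quadratic relations, a normal monomial is a $\st$-product of singletons and single brackets $[a_i,a_j]$, but the \emph{pairing} of the bracketed labels is unconstrained, giving $\sum_{s}\binom{n}{2s}(2s-1)!!$ monomials; already at $n=4$ this is $10$, not $8$. The infinite family of Gr\"obner elements rewrites every product of brackets with interleaved labels to the unique one with sorted labels $i_1<i_2<\cdots<i_{2s}$, so the true normal forms are parametrised by even-size subsets of $\{1,\ldots,n\}$, yielding $\sum_s\binom{n}{2s}=2^{n-1}$. Your heuristic of ``choosing which consecutive slots are bracketed'' or matching with subsets of $\{2,\ldots,n\}$ does not correspond to the actual normal-form combinatorics. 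The fix is to conjecture the infinite Gr\"obner family explicitly and verify by direct inspection that all remaining S-polynomials reduce to zero; this is what the paper does.
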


\begin{proof}
We shall compute a Gr\"obner basis of defining relations of this operad; even though for the most convenient order of tree monomials the Gr\"obner basis we get is infinite, it is fairly regular-behaving, and is sufficient for computations. 

We introduce an order of tree monomials as follows: assuming that the bracket operation is greater than the product operation, we use a version of the path-lexicographic ordering of tree monomials~\cite{DK}, where we compare the path sequences of our trees using the lexicographic ordering of words. This ordering is compatible with the shuffle monoidal structure since any ordering of words gives a compatible ordering of tree monomials. For the quadratic relations, this ordering singles out the leading terms $[[a_1,a_2],a_3]$,$[[a_1,a_3],a_2]$, and $[a_1,[a_2,a_3]]$ of the relations \eqref{2step1}, the leading terms $[a_1\cdot a_2,a_3]$, $[a_1\cdot a_3,a_2]$ and $[a_1,a_2\cdot a_3]$ of the relations \eqref{2step2} (those that the usual rewriting in Poisson algebras would eliminate), and the leading terms $(a_1\cdot a_2)\cdot a_3$ and $(a_1\cdot a_3)\cdot a_2$ of the associativity relations~\eqref{2step3}. The arity $4$ part of the Gr\"obner basis consists of the elements $[a_1,a_4]\cdot[a_2,a_3]+[a_1,a_3]\cdot[a_2,a_4]$ and $[a_1,a_3]\cdot[a_2,a_4]+[a_1,a_2]\cdot[a_3,a_4]$, coming from the overlaps between the leading (and the only) term of the relations \eqref{2step1} and the leading terms of relations \eqref{2step2}. 

Moreover, let us introduce, for a set of Lie monomials (that is, tree monomials involving the bracket only) $c_1,\ldots,c_k$, their standardised product 
 $$
\st(c_1,\ldots,c_k):=(c_{i_1}\cdot(c_{i_2}\cdot(\cdots(c_{i_{k-1}}\cdot c_{i_k})\cdots))), 
 $$
where $c_{i_1}$, \ldots, $c_{i_k}$ is the re-ordering of the factors $c_1$, \ldots, $c_k$ according to their minimal arguments. Then for every arity $k\ge 4$ the Gr\"obner basis in arity $k$ consists of the elements
\begin{multline*}
\st([a_p,a_r],[a_q, a_s],a_1,\ldots,\hat{a_p},\ldots,\hat{a_q},\ldots,\hat{a_r},\ldots,\hat{a_s},\ldots)+\\+\st([a_p,a_q],[a_r, a_s],a_1,\ldots,\hat{a_p},\ldots,\hat{a_q},\ldots,\hat{a_r},\ldots,\hat{a_s},\ldots)
\end{multline*}
and
\begin{multline*}
\st([a_p,a_s],[a_q, a_r],a_1,\ldots,\hat{a_p},\ldots,\hat{a_q},\ldots,\hat{a_r},\ldots,\hat{a_s},\ldots)+\\+\st([a_p,a_q],[a_r, a_s],a_1,\ldots,\hat{a_p},\ldots,\hat{a_q},\ldots,\hat{a_r},\ldots,\hat{a_s},\ldots) 
\end{multline*}
for all $p<q<r<s$. This can be checked by a direct inspection.

Examining the leading terms of these elements, we conclude that as a vector space $\gr\sAs(n)$ has a basis consisting of the monomials
 $$
\st([a_{i_1},a_{i_2}],[a_{i_3},a_{i_4}],\ldots,[a_{i_{2s-1}},a_{i_{2s}}],a_{j_1},\ldots,a_{j_k}), 
 $$
where $n=2s+k$, $s\ge 0$, $i_1<\ldots<i_{2s}$, $j_1<\ldots<j_k$, and $$\{i_1,i_2,\ldots,i_{2s},j_1,\ldots,j_k\}=\{1,\ldots,n\}.$$ Therefore
 $$
\dim\sAs(n)=2^{n-1},
 $$
since we have $\binom{n}{2s}$ monomials for each $0\le 2s\le n$. 
\end{proof}

\begin{corollary}
Over any field, the the $S_n$-module $\sAs(n)$ is isomorphic to the direct sum of the Specht modules $$\Ind_{S_{2s}\times S_{n-2s}}^{S_n}(\sgn_{2s}\otimes\mathbbold{1}_{n-2s}), \quad 0\le 2s\le n,$$ each appearing with multiplicity one.
\end{corollary}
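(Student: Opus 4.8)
The plan is to exhibit each summand as an induced module via Frobenius reciprocity, using the relations produced in the proof of Theorem~\ref{sAs}. Since the defining relations \eqref{2step1}--\eqref{2step3} are homogeneous in the number of occurrences of the bracket, the operad $\sAs$ is graded by that number, so $\sAs(n)=\bigoplus_{0\le 2s\le n}\sAs(n)_s$ as $S_n$-modules, where $\sAs(n)_s$ is spanned by the normal monomials containing exactly $s$ brackets. By Theorem~\ref{sAs} the normal monomials with $s$ brackets are indexed by the $2s$-element subsets of $\{1,\dots,n\}$, so $\dim\sAs(n)_s=\binom{n}{2s}$.

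Fix $s$ and consider the element
$$
m_s=\st\bigl([a_1,a_2],\dots,[a_{2s-1},a_{2s}],a_{2s+1},\dots,a_n\bigr)\in\sAs(n)_s,
$$
which is a normal monomial, hence nonzero. I would first verify that the Young subgroup $S_{\{1,\dots,2s\}}\times S_{\{2s+1,\dots,n\}}$ acts on the line $\k m_s$ through the character $\sgn_{2s}\otimes\mathbbold{1}_{n-2s}$: the second factor acts trivially by commutativity of $\cdot$, while for the first factor it is enough to check the adjacent transpositions $(i,i+1)$, $1\le i\le 2s-1$. For odd $i$ such a transposition interchanges the two arguments of a single bracket and produces a sign by skew-symmetry; for even $i$ it interchanges the last argument of one bracket with the first argument of the next, and the arity-four relations from the Gr\"obner basis --- which assert that $[a_p,a_r]\cdot[a_q,a_s]$ and $[a_p,a_s]\cdot[a_q,a_r]$ are each equal to $-[a_p,a_q]\cdot[a_r,a_s]$, with the remaining bracket factors and singletons acting as spectators --- again produce a sign. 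As these transpositions generate $S_{2s}$, the claim follows.

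By Frobenius reciprocity this datum defines an $S_n$-equivariant map
$$
\phi_s\colon\Ind_{S_{2s}\times S_{n-2s}}^{S_n}\bigl(\sgn_{2s}\otimes\mathbbold{1}_{n-2s}\bigr)\longrightarrow\sAs(n)_s
$$
taking the canonical generator to $m_s$. Its image contains $\sigma\cdot m_s$ for every $\sigma\in S_n$; rewriting $\sigma\cdot m_s$ in normal form using only skew-symmetry of the bracket, commutativity and associativity of $\cdot$, and the arity-four relations above turns it into a sign times the normal monomial attached to the $2s$-subset $\sigma(\{1,\dots,2s\})$. Hence $\phi_s$ is surjective, and since both sides have dimension $\binom{n}{2s}$ it is an isomorphism; summing over $s$ gives the corollary.

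The step I expect to be the real work is the verification that $S_{2s}$ acts on $\k m_s$ by the sign representation: this is precisely where the arity-four relations of the Gr\"obner basis (equivalently, the interplay of the derivation law \eqref{2step2} with the two-step nilpotency \eqref{2step1}) are indispensable, since skew-symmetry and commutativity alone would make the span of products of $s$ brackets far too large. Everything else is bookkeeping with the basis from Theorem~\ref{sAs} and standard properties of induced modules, valid over an arbitrary field.
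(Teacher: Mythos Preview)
Your proof is correct and follows essentially the same approach as the paper's, which likewise observes that a basis monomial is fixed by $S_k$ (commutativity of~$\cdot$) and alternates under $S_{2s}$, hence generates a copy of the induced module, with directness coming from the grading by bracket number; your version is simply more explicit about the sign verification and the dimension count. One small slip: the two arity-four Gr\"obner elements combine to give $[a_p,a_s]\cdot[a_q,a_r]=+[a_p,a_q]\cdot[a_r,a_s]$, not $-$, but your argument only uses the relation $[a_p,a_r]\cdot[a_q,a_s]=-[a_p,a_q]\cdot[a_r,a_s]$ for the even adjacent transpositions, so this does not affect the proof.
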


\begin{proof}
The basis elements constructed above are fixed by the subgroup $S_k\subset S_n$ permuting $\{j_1,\ldots,j_k\}$ and alternate under the action of  the subgroup $S_{2s}\subset S_n$ permuting the complement of~$\{j_1,\ldots,j_k\}$, so they generate a submodule of $\sAs(n)$ isomorphic to the Specht module $\Ind_{S_k\times S_{n-k}}^{S_n}(\mathbbold{1}_k\otimes\sgn_{n-k})$. Clearly, the sum of these submodules is direct.
\end{proof}

\begin{remark}
In fact, results of this section are valid over any commutative ring, since the leading terms of all elements we compute are equal to $\pm1$.
\end{remark}

\section{Proof of the main result}

\begin{definition}
The \emph{modified dual alternative operad} $\sAlt$ is generated by a symmetric binary operation $a_1,a_2\mapsto a_1\cdot a_2$ and a skew-symmetric binary operation $a_1,a_2\mapsto [a_1,a_2]$ subject to relations
\begin{gather*}
[[a_1,a_2],a_3]+[[a_1,a_3],a_2]=0,\\
[a_1\cdot a_2,a_3]=a_1\cdot[a_2,a_3]+[a_1,a_3]\cdot a_2,\\
(a_1\cdot a_2)\cdot a_3-a_1\cdot(a_2\cdot a_3)=[a_2,[a_1,a_3]].
\end{gather*} 
\end{definition}

Our previous discussions show that over a field of characteristic three this operad is isomorphic to the operad of dual alternative algebras, and over a field of characteristic different from three it is isomorphic to the operad~$\sAs$. To make it ``more interesting'' in characteristic different from three, we shall now introduce a further modification. Let us define a decreasing filtration by suboperads $F^p\sAlt$ of the operad $\sAlt$, letting $F^p\sAlt(n)$ be the span of all monomials of arity~$n$ built from the product $a,b\mapsto a\cdot b$ and the bracket $a,b\mapsto [a,b]$ for which the bracket is used at least $p$ times. Thus, $F^0\sAlt(n)=\sAlt(n)$, $F^1\sAlt(n)$ is the $n^\text{th}$ component of the operadic ideal generated by the bracket etc. Then because of ``associativity up to commutators''~\eqref{def-ass}, in the graded operad 
 $$
\gr\sAlt:=\bigoplus_{p}F^p\sAlt/F^{p+1}\sAlt
 $$
obtained from this filtration, the product $a,b\mapsto a\cdot b$ becomes associative, and the computations get simplified dramatically. In addition to this operad, we shall also study the operad $\qgr\sAlt$ whose defining relations are graded versions of the \emph{quadratic} relations of $\sAlt$, that is
\begin{gather*}
[[a_1,a_2],a_3]+[[a_1,a_3],a_2]=0,\\
[a_1\cdot a_2,a_3]=a_1\cdot[a_2,a_3]+[a_1,a_3]\cdot a_2,\\
(a_1\cdot a_2)\cdot a_3=a_1\cdot(a_2\cdot a_3).
\end{gather*}

\begin{theorem}
Over any field of characteristic different from two, $\dim\qgr\sAlt(n)=2^n-n$.
\end{theorem}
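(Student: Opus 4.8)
The plan is to follow the strategy of the proof of Theorem~\ref{sAs}: to compute a Gr\"obner basis of the defining relations of $\qgr\sAlt$, viewed as a shuffle operad with respect to the same path-lexicographic order for which the bracket is greater than the product, and then to count the tree monomials that are not divisible by any of the leading terms.

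First I would record the leading terms of the quadratic relations. The shuffle versions of the relations involving the bracket alone (skew-symmetry together with $[[a_1,a_2],a_3]+[[a_1,a_3],a_2]=0$) have leading terms which orient every iterated bracket into a left comb $[[\cdots[a_{i_1},a_{i_2}],a_{i_3}],\ldots,a_{i_\ell}]$; moreover, resolving the self-overlaps of the relation $[[a_1,a_2],a_3]+[[a_1,a_3],a_2]=0$ makes an iterated bracket of length $\ell\ge 3$ \emph{totally} antisymmetric in its arguments, in the same way that the relation $[[a_1,a_2],a_3]=0$ trivialised the bracket part in the toy model. The Leibniz relation contributes the leading terms $[a_1\cdot a_2,a_3]$, $[a_1\cdot a_3,a_2]$ and $[a_1,a_2\cdot a_3]$, and the associativity relation contributes $(a_1\cdot a_2)\cdot a_3$ and $(a_1\cdot a_3)\cdot a_2$, precisely as in Theorem~\ref{sAs}. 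Reduction by the first two families of leading terms pushes every product outside every bracket and, using associativity and commutativity of the product, brings an arbitrary tree monomial to a standardised product $\st(c_1,\ldots,c_m)$ of left-combed Lie monomials $c_i$ supported on pairwise disjoint sets of arguments.

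Next I would run Buchberger's algorithm; the decisive overlaps are those of $[[a_1,a_2],a_3]+[[a_1,a_3],a_2]=0$ with the Leibniz relation. In the toy model these gave the arity $4$ Gr\"obner basis elements $[a_1,a_4]\cdot[a_2,a_3]+[a_1,a_3]\cdot[a_2,a_4]$ and their higher-arity descendants; with the weaker bracket relation in force they should instead express a product of two brackets through a single longer left comb times the remaining leaves, plus lower terms. Propagating such relations to all arities by means of standardised products, exactly as in Theorem~\ref{sAs}, one obtains an infinite but completely regular Gr\"obner basis, the net effect of which (together with the quadratic relations) is that in a normal tree monomial at most one of the left-combed factors may have length $\ge 2$. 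The surviving tree monomials are then
$$
\st\bigl([[\cdots[a_{i_1},a_{i_2}],a_{i_3}],\ldots,a_{i_\ell}],\,a_{j_1},\ldots,a_{j_k}\bigr),
$$
with $n=\ell+k$, either $\ell=0$ or $\ell\ge 2$, $i_1<\cdots<i_\ell$, $j_1<\cdots<j_k$ and $\{i_1,\ldots,i_\ell\}\sqcup\{j_1,\ldots,j_k\}=\{1,\ldots,n\}$. There is one such monomial for $\ell=0$ and $\binom{n}{\ell}$ for each $\ell$ with $2\le\ell\le n$, whence
$$
\dim\qgr\sAlt(n)=1+\sum_{\ell=2}^{n}\binom{n}{\ell}=2^n-n.
$$
Since all leading coefficients occurring in the computation are equal to $\pm1$, this count is independent of the field, as long as its characteristic is different from two.

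I expect the main difficulty to lie in verifying that the Gr\"obner basis is complete, i.e. that every S-polynomial reduces to zero. In the toy model the vanishing of $[[a_1,a_2],a_3]$ prevents iterated brackets of length $\ge 3$ from occurring at all, whereas here left combs of arbitrary length appear among the normal monomials; one therefore has to guess, and then check, the correct closed form of the arity $k$ part of the Gr\"obner basis for every $k$, including the new overlaps of its arity $4$ elements with the Leibniz relation. As in the proof of Theorem~\ref{sAs}, I expect this to reduce, once the right pattern has been identified, to a finite --- if somewhat laborious --- inspection.
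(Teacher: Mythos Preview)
Your overall strategy is the same as the paper's, but the heart of your argument --- the guessed shape of the normal monomials --- is wrong. The key miscalculation is in the pure Lie part. You assert that resolving self-overlaps of $[[a_1,a_2],a_3]+[[a_1,a_3],a_2]=0$ makes iterated brackets of every length $\ell\ge 3$ into a one-dimensional totally antisymmetric space. In fact, over a field of characteristic different from two they \emph{vanish} for $\ell\ge 4$. From $[x,[y,z]]=[[x,z],y]$ (which follows from the relation together with skew-symmetry) one gets $[a,[b,[c,d]]]=[a,[[b,d],c]]=[[a,c],[b,d]]$; the right-hand side is simultaneously totally antisymmetric in $a,b,c,d$ and skew-symmetric under the outer bracket, hence $2\,[[a,c],[b,d]]=0$. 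This is precisely the arity~$4$ S-polynomial $[a_1,[a_2,[a_3,a_4]]]$ that the paper finds, and it is also where the hypothesis $\mathrm{char}\ne 2$ genuinely enters (some leading coefficients are $\pm 2$, not only $\pm 1$ as you claim).

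Consequently your proposed normal forms $\st\bigl([[\cdots[a_{i_1},a_{i_2}],\ldots,a_{i_\ell}],a_{j_1},\ldots,a_{j_k}\bigr)$ with a single bracket factor of arbitrary length cannot be right: for $\ell\ge 4$ these elements are zero, so they are far from linearly independent, and conversely products of several length-two brackets are \emph{not} reducible to a single long comb. The paper's actual normal monomials are standardised products of many length-two brackets together with at most one length-three bracket and some loose factors $a_{j_i}$; the parameter that runs over $\{0,1,\ldots,n-2,n\}$ is the number $k$ of loose factors, not the length of a single bracket. That your count nevertheless lands on $2^n-n$ is the accident $\binom{n}{1}=\binom{n}{n-1}$; it does not rescue the argument. (A smaller point: with the ordering you adopt, the leading terms of the quadratic bracket relation are the \emph{left} combs $[[a_1,a_2],a_3]$ and $[[a_1,a_3],a_2]$, so normal bracket monomials are right combs, not left combs as you write.)
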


\begin{proof}
As above, we shall compute a Gr\"obner basis of defining relations of this operad.

We consider the same ordering of tree monomials as in Theorem~\ref{sAs}. For the quadratic relations, this ordering singles out the leading terms $[[a_1,a_2],a_3]$ and $[[a_1,a_3],a_2]$ of the relations \eqref{altdual3}, the leading terms $[a_1\cdot a_2,a_3]$, $[a_1\cdot a_3,a_2]$ and $[a_1,a_2\cdot a_3]$ of the relations \eqref{leibniz} (those that the usual rewriting in Poisson algebras would eliminate), and the leading terms $(a_1\cdot a_2)\cdot a_3$ and $(a_1\cdot a_3)\cdot a_2$ of the associativity relations. 

The arity $4$ part of the Gr\"obner basis consist of the elements $[a_1,[a_2,[a_3,a_4]]]$,  $[a_1,a_4]\cdot[a_2,a_3]+[a_1,a_3]\cdot[a_2,a_4]$, and $[a_1,a_3]\cdot[a_2,a_4]+[a_1,a_2]\cdot[a_3,a_4]$; here the first of these elements comes from the S-polynomial defined by the overlap of the leading term of the relation \eqref{altdual3} with itself (compare with the computation of the Gr\"obner basis for the mock-commutative operad~\cite{GeK} performed in~\cite{DK}), and both the second and the third element come from the overlaps between the leading term of~\eqref{altdual3} and the leading terms of relations \eqref{leibniz}. Note that the assumption on the characteristic different from two is crucial; in characteristic two our operad has a quadratic Gr\"obner basis.

Furthermore, similar to the case of Theorem~\ref{sAs}, for every arity $k\ge 4$ the Gr\"obner basis in arity $k$ consists of the elements
\begin{multline*}
\st([a_p,a_r],[a_q, a_s],a_1,\ldots,\hat{a_p},\ldots,\hat{a_q},\ldots,\hat{a_r},\ldots,\hat{a_s},\ldots)+\\+\st([a_p,a_q],[a_r, a_s],a_1,\ldots,\hat{a_p},\ldots,\hat{a_q},\ldots,\hat{a_r},\ldots,\hat{a_s},\ldots)
\end{multline*}
and
\begin{multline*}
\st([a_p,a_s],[a_q, a_r],a_1,\ldots,\hat{a_p},\ldots,\hat{a_q},\ldots,\hat{a_r},\ldots,\hat{a_s},\ldots)+\\+\st([a_p,a_q],[a_r, a_s],a_1,\ldots,\hat{a_p},\ldots,\hat{a_q},\ldots,\hat{a_r},\ldots,\hat{a_s},\ldots) 
\end{multline*}
for all $p<q<r<s$, and additionally the elements
\begin{multline*}
\st([a_p,a_r],[a_q,[a_s,a_t]],a_1,\ldots,\hat{a_p},\ldots,\hat{a_q},\ldots,\hat{a_r},\ldots,\hat{a_s},\ldots,\hat{a_t},\ldots)+\\+\st([a_p,a_q],[a_r, [a_s,a_t]],a_1,\ldots,\hat{a_p},\ldots,\hat{a_q},\ldots,\hat{a_r},\ldots,\hat{a_s},\ldots,\hat{a_t},\ldots)
\end{multline*}
\begin{multline*}
\st([a_p,a_s],[a_q,[a_r,a_t]],a_1,\ldots,\hat{a_p},\ldots,\hat{a_q},\ldots,\hat{a_r},\ldots,\hat{a_s},\ldots,\hat{a_t},\ldots)-\\-\st([a_p,a_q],[a_r, [a_s,a_t]],a_1,\ldots,\hat{a_p},\ldots,\hat{a_q},\ldots,\hat{a_r},\ldots,\hat{a_s},\ldots,\hat{a_t},\ldots)
\end{multline*}
\begin{multline*}
\st([a_p,a_t],[a_q,[a_r,a_s]],a_1,\ldots,\hat{a_p},\ldots,\hat{a_q},\ldots,\hat{a_r},\ldots,\hat{a_s},\ldots,\hat{a_t},\ldots)+\\+\st([a_p,a_q],[a_r, [a_s,a_t]],a_1,\ldots,\hat{a_p},\ldots,\hat{a_q},\ldots,\hat{a_r},\ldots,\hat{a_s},\ldots,\hat{a_t},\ldots)
\end{multline*}
\begin{multline*}
\st([a_q,a_r],[a_p,[a_s,a_t]],a_1,\ldots,\hat{a_p},\ldots,\hat{a_q},\ldots,\hat{a_r},\ldots,\hat{a_s},\ldots,\hat{a_t},\ldots)-\\-\st([a_p,a_q],[a_r, [a_s,a_t]],a_1,\ldots,\hat{a_p},\ldots,\hat{a_q},\ldots,\hat{a_r},\ldots,\hat{a_s},\ldots,\hat{a_t},\ldots)
\end{multline*}
\begin{multline*}
\st([a_q,a_s],[a_p,[a_r,a_t]],a_1,\ldots,\hat{a_p},\ldots,\hat{a_q},\ldots,\hat{a_r},\ldots,\hat{a_s},\ldots,\hat{a_t},\ldots)+\\+\st([a_p,a_q],[a_r, [a_s,a_t]],a_1,\ldots,\hat{a_p},\ldots,\hat{a_q},\ldots,\hat{a_r},\ldots,\hat{a_s},\ldots,\hat{a_t},\ldots)
\end{multline*}
\begin{multline*}
\st([a_q,a_t],[a_p,[a_r,a_s]],a_1,\ldots,\hat{a_p},\ldots,\hat{a_q},\ldots,\hat{a_r},\ldots,\hat{a_s},\ldots,\hat{a_t},\ldots)-\\-\st([a_p,a_q],[a_r, [a_s,a_t]],a_1,\ldots,\hat{a_p},\ldots,\hat{a_q},\ldots,\hat{a_r},\ldots,\hat{a_s},\ldots,\hat{a_t},\ldots)
\end{multline*}
\begin{multline*}
\st([a_r,a_s],[a_p,[a_q,a_t]],a_1,\ldots,\hat{a_p},\ldots,\hat{a_q},\ldots,\hat{a_r},\ldots,\hat{a_s},\ldots,\hat{a_t},\ldots)-\\-\st([a_p,a_q],[a_r, [a_s,a_t]],a_1,\ldots,\hat{a_p},\ldots,\hat{a_q},\ldots,\hat{a_r},\ldots,\hat{a_s},\ldots,\hat{a_t},\ldots)
\end{multline*}
\begin{multline*}
\st([a_r,a_t],[a_p,[a_q,a_s]],a_1,\ldots,\hat{a_p},\ldots,\hat{a_q},\ldots,\hat{a_r},\ldots,\hat{a_s},\ldots,\hat{a_t},\ldots)+\\+\st([a_p,a_q],[a_r, [a_s,a_t]],a_1,\ldots,\hat{a_p},\ldots,\hat{a_q},\ldots,\hat{a_r},\ldots,\hat{a_s},\ldots,\hat{a_t},\ldots)
\end{multline*}
\begin{multline*}
\st([a_s,a_t],[a_p,[a_q,a_r]],a_1,\ldots,\hat{a_p},\ldots,\hat{a_q},\ldots,\hat{a_r},\ldots,\hat{a_s},\ldots,\hat{a_t},\ldots)-\\-\st([a_p,a_q],[a_r, [a_s,a_t]],a_1,\ldots,\hat{a_p},\ldots,\hat{a_q},\ldots,\hat{a_r},\ldots,\hat{a_s},\ldots,\hat{a_t},\ldots)
\end{multline*}
for all $p<q<r<s<t$, and the elements
 $$
\st([a_p,[a_q,a_r]],[a_s\cdot [a_t,a_u]],a_1,\ldots,\hat{a_p},\ldots,\hat{a_q},\ldots,\hat{a_r},\ldots,\hat{a_s},\ldots,\hat{a_t},\ldots,\hat{a_u},\ldots)
 $$
for all $p<q<r<s<t<u$. This can be checked by a direct inspection.

Examining the leading terms of these elements, we conclude that as a vector space $\qgr\sAlt(n)$ has a basis consisting of the monomials
 $$
\st([a_{i_1},a_{i_2}],[a_{i_3},a_{i_4}],\ldots,[a_{i_{2s-1}},a_{i_{2s}}],a_{j_1},\ldots,a_{j_k}), 
 $$
where $n=2s+k$, $s\ge 0$, $i_1<\ldots<i_{2s}$, $j_1<\ldots<j_k$, and $\{i_1,\ldots,i_{2s},j_1,\ldots,j_k\}=\{1,\ldots,n\},$ together with the monomials
 $$
\st([a_{i_1},a_{i_2}],[a_{i_3},a_{i_4}],\ldots,[a_{i_{2s-1}},[a_{i_{2s}},a_{i_{2s+1}}]],a_{j_1},\ldots, a_{j_k}),
 $$
where $n=2s+1+k$, $s\ge 1$, $i_1<\ldots<i_{2s+1}$, $j_1<\ldots<j_k$, and 
 $\{i_1,\ldots,i_{2s},i_{2s+1},j_1,\ldots,j_k\}=\{1,\ldots,n\}.$
Therefore
 $$
\dim\qgr\sAlt(n)=\dim\gr\sAlt(n)=2^n-n,
 $$
since we have $\binom{n}{k}$ monomials for each $k=0,1,\ldots,n-2,n$ ($k=n-1$ does not occur). 
\end{proof}

\begin{corollary}\label{dirsum}
Over any field of characteristic different from two, the $S_n$-module $\qgr\sAlt(n)$ is isomorphic to the direct sum of Specht modules $$\Ind_{S_k\times S_{n-k}}^{S_n}(\mathbbold{1}_k\otimes\sgn_{n-k}), \quad 0\le k\le n-2 \text{ and } k=n,$$ each with multiplicity one.
\end{corollary}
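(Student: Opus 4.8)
The plan is to repeat, with small modifications, the argument by which the corollary to Theorem~\ref{sAs} was deduced from a monomial basis. The proof of the preceding theorem exhibits a basis of $\qgr\sAlt(n)$ consisting of two families of monomials: the \emph{double bracket} monomials $\st([a_{i_1},a_{i_2}],\ldots,[a_{i_{2s-1}},a_{i_{2s}}],a_{j_1},\ldots,a_{j_k})$, indexed by a $2s$-element subset $U=\{i_1<\cdots<i_{2s}\}$ of $\{1,\ldots,n\}$, and the \emph{triple bracket} monomials $\st([a_{i_1},a_{i_2}],\ldots,[a_{i_{2s-1}},[a_{i_{2s}},a_{i_{2s+1}}]],a_{j_1},\ldots,a_{j_k})$ with $s\ge1$, indexed by a $(2s+1)$-element subset $U=\{i_1<\cdots<i_{2s+1}\}$; in both cases $\{j_1<\cdots<j_k\}$ is the complement of~$U$. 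Since the product $\cdot$ is associative and commutative in $\qgr\sAlt$, the operation $\st$ does not depend on the order of its Lie-monomial arguments, so relabelling the arguments of a basis monomial by a permutation $\sigma\in S_n$ produces a monomial of exactly the same tree shape whose bracketed arguments form the set $\sigma(U)$; using antisymmetry of the bracket together with the relations of the Gr\"obner basis computed in the proof of the preceding theorem, none of which alters the tree shape, this monomial reduces to $\pm$ the basis monomial of the same family attached to~$\sigma(U)$. Consequently, for each admissible~$s$ the linear span $V_s$ of the basis monomials of one family attached to subsets of a fixed size is an $S_n$-submodule of $\qgr\sAlt(n)$.

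Next I would fix a subset~$U$ and compute how its stabiliser $S_U\times S_{U^c}$ acts on the corresponding basis monomial $m_U$. The subgroup $S_{U^c}$ fixes $m_U$ because singleton arguments commute. For a double bracket monomial the subgroup $S_U\cong S_{2s}$ acts through the sign character, exactly as in the corollary to Theorem~\ref{sAs}: an adjacent transposition either flips one bracket, giving $-1$ by antisymmetry, or exchanges the adjacent ends of two consecutive brackets, giving $-1$ by the arity~$4$ Gr\"obner relation $[a_1,a_3]\cdot[a_2,a_4]=-[a_1,a_2]\cdot[a_3,a_4]$. For a triple bracket monomial the same holds with $S_U\cong S_{2s+1}$, the only new ingredient being that $S_U$ acts through the sign character on the trailing factor $[a_{i_{2s-1}},[a_{i_{2s}},a_{i_{2s+1}}]]$ as well: transposing the two arguments of the inner bracket gives $-1$ by antisymmetry, transposing the outer argument with an inner one gives $-1$ because the relation~\eqref{altdual3} yields $[a_1,[a_2,a_3]]=-[a_2,[a_1,a_3]]$ and, likewise, makes the ternary operation $(x,y,z)\mapsto[x,[y,z]]$ totally antisymmetric, and the transposition coupling this factor to the preceding double bracket gives $-1$ by the pertinent arity~$5$ Gr\"obner relation.

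It follows that sending a generator of $\Ind_{S_{U^c}\times S_U}^{S_n}(\mathbbold{1}\otimes\sgn)$ to $m_U$ defines an $S_n$-equivariant homomorphism onto $V_s$, which is spanned by the $S_n$-orbit of $m_U$; since both sides have dimension $\binom{n}{|U|}$, this homomorphism is an isomorphism. Writing $k=n-|U|$, the double bracket monomials thus contribute the Specht module $\Ind_{S_k\times S_{n-k}}^{S_n}(\mathbbold{1}_k\otimes\sgn_{n-k})$ for every $k$ with $0\le k\le n$ and $k\equiv n\pmod2$, and the triple bracket monomials contribute it for every $k$ with $0\le k\le n-3$ and $k\equiv n-1\pmod2$; together these are precisely the values $0\le k\le n-2$ and $k=n$, each occurring once. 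The sum of all these submodules is direct, because the union of the two families of monomials is exactly the basis of $\qgr\sAlt(n)$ exhibited in the proof of the preceding theorem.

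I expect the main obstacle to be the claim used in the first paragraph: that relabelling a basis monomial by a permutation and then reducing modulo the Gr\"obner basis returns $\pm$ a single basis monomial of the same family. This is the representation-theoretic shadow of the confluence verifications done ``by direct inspection'' in the proof of the preceding theorem, and it rests on the observation that no rewriting rule of that Gr\"obner basis converts a product of two simple brackets into a nested double bracket or conversely, so that the two families are never mixed and the ``shape invariants'' (number of simple brackets, presence of a nested bracket) are genuinely preserved. Granting this, everything else is routine bookkeeping with induced modules.
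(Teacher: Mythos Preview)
Your proposal is correct and follows essentially the same approach as the paper's own proof, which simply asserts that the basis monomials are fixed by $S_{U^c}$ and alternate under $S_U$, hence generate the claimed induced module, and that the sum is direct. Your version spells out in detail why the alternation holds---via the two-term Gr\"obner relations, which indeed never mix the two families nor change tree shape---so the ``main obstacle'' you flag is genuinely resolved by the observation you already make.
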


\begin{proof}
The basis elements constructed above are fixed by the subgroup $S_k\subset S_n$ permuting $\{j_1,\ldots,j_k\}$ and alternate under the action of  the subgroup $S_{n-k}\subset S_n$ permuting the complement of~$\{j_1,\ldots,j_k\}$, so they generate a submodule of $\qgr\sAlt(n)$ isomorphic to the Specht module $\Ind_{S_k\times S_{n-k}}^{S_n}(\mathbbold{1}_k\otimes\sgn_{n-k})$. Clearly, the sum of these submodules is direct.
\end{proof}

\begin{remark}
In fact, the results above are valid over any commutative ring where $2$ is invertible, since the leading terms of all elements we compute are equal to $\pm1$ or $\pm2$.
\end{remark}

Now everything is ready to prove the main result of this paper.

\begin{theorem}
Over a field of characteristic three, the $S_n$-module $\Alt^!(n)$ has a Specht filtration with factors $$\Ind_{S_k\times S_{n-k}}^{S_n}(\mathbbold{1}_k\otimes\sgn_{n-k}), \quad 0\le k\le n-2 \text{ and } k=n,$$ each with multiplicity one.
\end{theorem}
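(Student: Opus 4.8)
The proof reduces the statement to the computations of the previous sections. Since over a field of characteristic three $\Alt^!$ is isomorphic to the modified dual alternative operad $\sAlt$, it suffices to produce a Specht filtration of $\sAlt(n)$ with the prescribed factors. The first point is that the filtration $F^\bullet\sAlt$ is by suboperads, and the symmetric group acts on $\sAlt(n)$ by relabelling the leaves of tree monomials, an operation that does not change the number of bracket vertices; hence each $F^p\sAlt(n)$ is an $S_n$-submodule of $\sAlt(n)$. Therefore $\sAlt(n)$ is filtered by $S_n$-submodules with successive quotients $F^p\sAlt(n)/F^{p+1}\sAlt(n)$, and taken together these quotients form the $S_n$-module $\gr\sAlt(n)$. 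Consequently, as soon as we know that $\gr\sAlt(n)$ is, with multiplicity one, the direct sum of the Specht modules $\Ind_{S_k\times S_{n-k}}^{S_n}(\mathbbold{1}_k\otimes\sgn_{n-k})$ for $0\le k\le n-2$ and $k=n$, each of which is bracket-homogeneous and hence lies inside a single graded component, the filtration $F^\bullet$ can be refined to a Specht filtration of $\sAlt(n)$ with exactly these factors, each appearing once.

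It thus remains to compute $\gr\sAlt(n)$ over a field of characteristic three. The two bracket-homogeneous defining relations of $\sAlt$ survive unchanged in the associated graded, while the inhomogeneous relation $(a_1\cdot a_2)\cdot a_3-a_1\cdot(a_2\cdot a_3)=[a_2,[a_1,a_3]]$, whose right-hand side uses the bracket twice, degenerates to the associativity relation. Hence $\gr\sAlt$ satisfies the defining relations of $\qgr\sAlt$, so there is a canonical surjection of operads $\qgr\sAlt\twoheadrightarrow\gr\sAlt$ compatible with the grading by the number of brackets used, and therefore an $S_n$-equivariant graded surjection $\qgr\sAlt(n)\twoheadrightarrow\gr\sAlt(n)$ for every $n$. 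By Corollary~\ref{dirsum}, together with the explicit internal direct sum decomposition exhibited in its proof, $\qgr\sAlt(n)$ is the direct sum, each with multiplicity one, of precisely the Specht modules in question; so it is enough to prove that this surjection is an isomorphism over a field of characteristic three, equivalently that $\dim\gr\sAlt(n)\ge 2^n-n$. Since $\dim\gr\sAlt(n)=\dim\sAlt(n)=\dim\Alt^!(n)$, this amounts to the lower bound $\dim\Alt^!(n)\ge 2^n-n$ in characteristic three.

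To obtain this lower bound I would compute a Gr\"obner basis of the defining relations of $\sAlt$ itself, over a field of characteristic three, using the same path-lexicographic ordering of tree monomials as in the previous sections. The relations of $\sAlt$ differ from those of $\qgr\sAlt$ only by the lower-bracket-degree term $[a_2,[a_1,a_3]]$ in the associativity relation; accordingly one expects the Gr\"obner basis of $\sAlt$ to be obtained from that of $\qgr\sAlt$ by adjoining to each of its elements correction terms of strictly smaller bracket-degree, with the same leading terms and with no new leading terms created while resolving overlaps. Granting this, the monomials in normal form with respect to $\sAlt$ coincide with those for $\qgr\sAlt$, so $\dim\sAlt(n)=2^n-n$; then the surjection $\qgr\sAlt(n)\to\gr\sAlt(n)$ is an isomorphism, and the theorem follows from the reduction in the first paragraph.

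The heart of the matter, and the only place where the characteristic being equal to three is essential, is the verification that no new leading terms arise. The relevant identity is $[[a_1,a_2],a_3]=0$: over a field of characteristic different from three this identity already holds in $\sAlt$, since the cyclic symmetry of the nested commutator forced by \eqref{altdual3}, combined with the Jacobi identity (a consequence of the correction term in the associativity relation), gives $3[[a_1,a_2],a_3]=0$; the leading term $[[a_1,a_2],a_3]$ that this would contribute is precisely what reduces the dimension of $\sAs$ to $2^{n-1}$. In characteristic three no such collapse happens, as multiplication by $3$ annihilates nothing, so \eqref{altdual3} remains strictly weaker than $[[a_1,a_2],a_3]=0$, and the Gr\"obner basis of $\sAlt$ retains exactly the family of leading terms found above for $\qgr\sAlt$. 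Carrying this bracket-degree bookkeeping through the (infinitely many, but regularly patterned) overlap resolutions, in parallel with the computation already performed for $\qgr\sAlt$, is the main remaining task.
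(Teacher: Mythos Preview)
Your overall strategy coincides with the paper's: use the $S_n$-stable bracket filtration to reduce to showing $\gr\sAlt\simeq\qgr\sAlt$, establish the latter by proving $\dim\sAlt(n)=\dim\qgr\sAlt(n)$, and prove this equality by showing that the Gr\"obner bases of $\sAlt$ and $\qgr\sAlt$ share the same leading terms. Your identification of where characteristic three is essential (the non-derivability of $[[a_1,a_2],a_3]=0$ from the Jacobi identity plus \eqref{altdual3}) is correct and is in fact more explicit than what the paper says.

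Where your proposal stops short is precisely the ``main remaining task'' you flag: you give no mechanism for actually carrying the overlap resolutions for $\sAlt$ in parallel with those for $\qgr\sAlt$. The paper supplies one concrete ingredient you are missing. After checking directly that the ternary overlaps produce no new leading terms, the paper observes that \eqref{def-ass} together with the arity~$4$ part of the Gr\"obner basis forces the \emph{restricted associativity} identity
\[
a_1\cdot(a_2\cdot[a_3,a_4])=(a_1\cdot[a_3,a_4])\cdot a_2
\]
in $\sAlt$. This identity is exactly what lets one treat products in which at least one factor is a commutator as if the dot were genuinely associative, and it is this that reduces the higher-arity overlap computations for $\sAlt$ to the corresponding ones already done for $\qgr\sAlt$. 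Without isolating this identity (or an equivalent device), the ``bracket-degree bookkeeping'' you allude to has no obvious reason to terminate with the same leading terms, so your argument as written has a genuine gap at its most technical point.
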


\begin{proof}
First of all, we know that in characteristic three there is no difference between the $\mathbb{S}$-modules $\Alt^!$ and $\sAlt$. We shall now show that the operad $\gr\sAlt$ has quadratic relations, that is that $\gr\sAlt\simeq\qgr\sAlt$ (in general, $\gr\sAlt$ is a quotient of $\qgr\sAlt$). Corollary~\ref{dirsum} will then imply that $\Alt^!$ has a Specht filtration. 

The main idea is to compare the Gr\"obner bases of $\sAlt$ and of $\qgr\sAlt$, and observe that those Gr\"obner bases have the same sets of leading terms, so as nonsymmetric collections $\sAlt\simeq\qgr\sAlt$. Thus, the underlying graded vector spaces of $\sAlt\simeq\gr\sAlt$ and $\qgr\sAlt$ are the same, which shows that the $\mathbb{S}$-modules $\gr\sAlt$ and $\qgr\sAlt$ are isomorphic.

The key facts that allow to compare the two (infinite!) Gr\"obner bases are the following ones. First, a direct computation of resolutions of overlaps for ternary relations shows that incorporating lower terms in \eqref{def-ass} does not make the corresponding Gr\"obner basis larger. Furthermore, \eqref{def-ass} together with the degree~$4$ part of the Gr\"obner basis force the identity
 $$
a_1\cdot(a_2\cdot[a_3,a_4])=(a_1\cdot[a_3,a_4])\cdot a_2
 $$
in $\sAlt$, and this ``restricted associativity'' allows to reduce most of the computations of resolutions of overlaps to the corresponding computations in $\qgr\sAlt$, which instantly identifies the respective steps in the Gr\"obner bases computations for the two operads.
\end{proof}

\bibliographystyle{amsplain}

\providecommand{\bysame}{\leavevmode\hbox to3em{\hrulefill}\thinspace}
\providecommand{\MR}{\relax\ifhmode\unskip\space\fi MR }
\providecommand{\MRhref}[2]{%
  \href{http://www.ams.org/mathscinet-getitem?mr=#1}{#2}
}
\providecommand{\href}[2]{#2}

\end{document}